\newcommand{\lp}{\left(}
\newcommand{\rp}{\right)}
\newcommand{\R}{\mathbb{R}}
\newcommand{\pfcrho}{\square}
\newcommand{\pfa}{\blacksquare}
\newtheorem{theorem}{Theorem}[section]
\newtheorem{conjecture}{Conjecture}[section]
\newtheorem{corollary}[theorem]{Corollary}
\newtheorem{lemma}[theorem]{Lemma}
\newtheorem{proposition}[theorem]{Proposition}
\newtheorem{remark}[theorem]{Remark}
\newcommand{\Lop}[3]{\ensuremath{#1\Delta_{#2}^{#3}\,}}
\author{Joonas Ilmavirta$^1$, Hj\o rdis Schl\"uter$^{1,*}$}
\address{$^1$Department of Mathematics and Statistics, University of Jyv\"askyl\"a, Finland}
\address{$^*$Corresponding author}
\email{joonas.ilmavirta@jyu.fi, hjordis.a.schluter@jyu.fi}
\subjclass{35R30}
\keywords{Dirichlet-to-Neumann map, inverse problems, elastic wave equation, Riemannian geometry, anisotropic stiffness tensor.}
\title[Gauge freedoms in the anisotropic elastic DN map]{Gauge freedoms in the anisotropic elastic Dirichlet-to-Neumann map}
\date{\today}
\begin{document}

\maketitle

\begin{abstract}
    We address the inverse problem of recovering the stiffness tensor and density of mass from the Dirichlet-to-Neumann map. We study the invariance of the Euclidean and Riemannian elastic wave equation under coordinate transformations. Furthermore, we present gauge freedoms between the parameters that leave the elastic wave equations invariant. We use these results to present gauge freedoms in the Dirichlet-to-Neumann map associated to the Riemannian elastic wave equation. 
\end{abstract}



\section{Introduction}
We study the elastic wave equation (EWE) in the $n$-dimensional Euclidean space and on an $n$-dimensional Riemannian manifold. We address the inverse problem of recovering the anisotropic stiffness tensor and the density of mass from the Dirichlet-to-Neumann (DN) map. In this context it is essential for the reconstruction procedure whether the DN map determines the stiffness tensor and density uniquely. For that purpose we study invariance of the elastic wave equation: How the EWE behaves under coordinate transformations and what gauge freedoms there are between the parameters. Based on these results we present gauge freedoms for the DN map in the Riemannian setting and conjecture that these are the correct and full gauge groups. If this holds true then the Euclidean DN map determines the stiffness tensor and density uniquely in two (and possibly higher) dimensions.  This is of particular interest as the three-dimensional Euclidean EWE is the natural setting in seismology, where the EWE models seismic waves. \par

Let $M\subset\R^n$ be the closure of a smooth domain, and let~$g$ be a Riemannian metric on~$M$.
On~$M$ we use the Euclidean coordinates~$x$.
The material parameters are the stiffness tensor~$c$ (contravariant of rank~$4$) and the density~$\rho$.
Following the symmetries of the stress and strain tensors and their relationship, the stiffness tensor has the minor and major symmetries 
    \begin{equation}\label{csym}
        c^{ijk\ell}=c^{jik\ell}=c^{k\ell ij},
    \end{equation}
and is positive in the sense that\footnote{Here and throughout the paper we use the Einstein summation convention, also in the Euclidean setting where all indices are kept down. Whenever there is a non-repeated index, the corresponding equation holds for all values $1,\dots,n$ of it.}
    \begin{equation}\label{cpos}
        c^{ijk\ell}A_{ij}A_{k\ell} \geq \delta g^{jk}g^{i\ell} A_{ij}A_{k\ell}
    \end{equation}
for some $\delta>0$ and for all symmetric matrices~$A$. Additionally, the density of mass $\rho$ is positive:
\begin{equation*}
    \rho \geq K,
\end{equation*}
for some constant $K>0$.

See~\cite{dehoop2023reconstruction} for a discussion on these structures.
The density is assumed positive in the usual sense.
The metric tensor is not a property of the material but of the space itself, and it is therefore forced to be Euclidean in practice --- this will play an important role.

These material parameters give rise to the elastic Laplacian~$\Lop{}{c}{g}$ defined by
\begin{align}
\label{eq:EWop}
(\Lop{}{c}{g} u(t,x))^i
=
\frac{1}{\sqrt{\det(g)}}  \, 
\partial_{x^j}
\lp
\sqrt{\det(g)}
c^{ijk\ell} g_{\ell m} \partial_{x^k} u^m(t,x) 
\rp,
\end{align}
which maps vector fields to vector fields.
The elastic wave operator is
\begin{equation}
P_{(\rho,c,g)}
=
\partial_t^2 - \Lop{\rho^{-1}}{c}{g}.
\end{equation}
The displacement vector $u(t,x)\in\R^n$ satisfies the Riemannian version $P_{(\rho,c,g)}u=0$ of the elastic wave equation.

We are therefore naturally led to the boundary value problem
\begin{equation}
\label{REWE}
     \begin{cases}
         P_{(\rho,c,g)} u(t,x) = 0 &  \text{in }   (0,T) \times M,\\
         u(t,x) = f(t,x) & \text{on } (0,T) \times \partial M ,\\
         u(0,x)=\partial_t u(0,x)=0  & \text{in }M
     \end{cases}
\end{equation}
in the spacetime $(0,T) \times M$ and where $f$ satisfies: $f(x,0)=\partial_t f(x,0)=0$ for $x\in \partial M$.

For the special case $g=g_E$ the Riemannian EWE reduces to the familiar Euclidean EWE
\begin{equation}\label{EWE}
        \begin{cases}
            (P_{(\rho,c,g_E)}u(t,x)) = 0 &  \text{in } (0,T) \times M\\
            u(t,x) = f(t,x) & \text{on } (0,T) \times \partial M ,\\
         u(0,x)=\partial_t u(0,x)=0  & \text{in }M.
        \end{cases}
\end{equation}
The Euclidean elastic Laplacian acts as
\begin{equation*}
    (\Lop{}{c}{g_E} u(t,x))_i = \partial_{x_j}
    \lp
    c_{ijk\ell} \, \partial_{x_k} u_{\ell}(t,x)
    \rp.
\end{equation*}
For the Euclidean elastic wave operator~$P_{(\rho,c,g_E)}$ we will use the short hand notation $P_{(\rho,c,g_E)}=P_{(\rho,c)}$ in the following. Assuming that $c_{ijk\ell} \in L^{\infty}(M), \rho \in L^{\infty}(M)$ and with $f \in L^2(0,T;H^1(\partial M)) \cap H^1(0,T;L^2(\partial M))$ the initial value boundary value problem \eqref{EWE} has the unique solution $u \in L^2(0,T;H^1(M))\cap H^1(0,T;L^2(M))$ and additionally, $u \in C(0,T;H^1(M))\cap C^1(0,T;L^2(M))$ \cite[Thm. 2.4.5]{stolk00},\cite[Lemma 3.5]{BeLa02}. Following the analysis by Stolk in \cite[Sec. 2.3]{stolk00} these well-posedness results can be extended to the Riemannian initial boundary value problem \eqref{REWE}.

\subsection{The Finsler metric arising from elasticity}

Inverse problems related to elasticity are mainly concerned with recovering the stiffness tensor~$c$ or the reduced stiffness tensor $a=\rho^{-1}c$ everywhere from boundary data. One way to define elastic geometry is in terms of travel time distance between two points. Here one considers an elastic body that is modeled as a manifold and distance is measured by the shortest time it takes for an elastic wave to go from one point to the other. When the elastic material is elliptically anisotropic the elastic geometry is Riemannian, but this puts very stringent restrictions on the stiffness tensor.

Recent research is devoted to the fully anisotropic setting for the stiffness tensor, where only the physically necessary assumptions as in \eqref{csym}--\eqref{cpos} are needed.
The resulting elastic geometry is not Riemannian but Finslerian \cite{HILSmay21,HILSaug21}.
The Finsler metric can be derived from the principal behavior of the elastic wave operator $P_{(\rho,c,g)}$ defined above.
The components of its matrix-valued principal symbol $\sigma(t,x,\omega,p)$ are
\begin{equation*}
    \sigma(P_{(\rho,c,g)})_{\,\,m}^i(t,x,\omega,\xi)
    =
    -\delta_{\,\,m}^i \omega^2  + \rho^{-1}(x)c^{ijk\ell}(x) g_{\ell m}(x) \xi_j \xi_k,
\end{equation*}

with $(t,x,\omega,\xi)\in T^*((0,T) \times M)$.
Introducing the slowness vector $p=\omega^{-1}\xi$ and the Christoffel matrix~$\Gamma(x,p)$ defined by
\begin{equation*}
     \Gamma_{\,\,m}^i(x,p):=\rho^{-1}(x)c^{ijk\ell}(x)g_{\ell m}(x) p_j p_k,
\end{equation*}
one can rewrite the principal symbol as
\begin{equation*}
    \sigma(P_{(\rho,c,g)})(t,x,\omega,p):=\omega^2 [\Gamma(x,p)-I].
\end{equation*}
By symmetry and positivity of~$c$ in \eqref{csym}--\eqref{cpos} the Christoffel matrix~$\Gamma$ is symmetric  and positive definite.
Eigenvectors of~$\Gamma$ are called polarizations and the corresponding eigenvalues are related to the wave speeds.
It turns out that the Finsler metric~$F_a^{qP}$ associated to the $qP$-polarization (the fastest waves) and the density-normalized stiffness tensor field $a=\rho^{-1}c$ for the elastic geometry can be derived from the largest eigenvalue of~$\Gamma$ as described in~\cite{HILSaug21}. 


\subsection{Dirichlet-to-Neumann maps and inverse problems}
\label{sec:dn-intro}

The (hyperbolic) Dirichlet-to-Neumann (DN) map is the map that sends the Dirichlet boundary condition~$f$ to its corresponding Neumann boundary condition. For the elastic wave operator~$P_{(\rho,c,g)}$ defined in~\eqref{REWE} the associated DN operator~$\Lambda_{(\rho,c,g)}$ reads
\begin{equation}\label{DNmapG}
        (\Lambda_{(\rho, c, g)} f)^i= \, \nu_j \,c^{ijk\ell} g_{\ell m} \partial_{x^{k}} u^{m}\Bigg\vert_{(0,T) \times \partial M}, 
\end{equation}
where~$\nu$ is the unit outward normal to~$\partial M$.
The inverse problem is to recover~$\rho$, $c$ and~$g$ from~$\Lambda_{(\rho,c,g)}$ (or only~$\rho$ and~$c$ when~$g$ is assumed known).

In the Euclidean setting the DN operator~$\Lambda_{(\rho, c)}$ associated to~$P_{(\rho,c)}$ defined in~\eqref{EWE} simplifies to
\begin{equation}\label{DNmapE}
        (\Lambda_{(\rho, c)} f)_i=\nu_j \, c_{ijk\ell} \, \partial_{x_{k}} u_{\ell}  \Bigg\vert_{(0,T) \times \partial M}.
\end{equation}
The inverse problem is to recover~$\rho$ and~$c$ from~$\Lambda_{(\rho,c)}$.
Following \cite{HoopNakamuraZhai19} the operator $\Lambda_{(\rho, c)}$ is defined as the map $\Lambda_{(\rho, c)}: C^2 \lp 0,T;H^{\frac{1}{2}}(\partial M) \rp \rightarrow L^2 \lp 0,T;H^{-\frac{1}{2}}(\partial M) \rp$. In the same lines as assessing well-posedness for the Riemmannian initial boundary value problem, the results for the Euclidean DN operator $\Lambda_{(\rho, c)}$ can be extended to the Riemannian DN operator $\Lambda_{(\rho, c, g)}$.

This inverse problem has been addressed for the case when the stiffness tensor is isotropic~\cite{Rachele001,RACHELE002,HoopNakamuraZhai17,Stefanov2017} and for the case when the stiffness tensor is transversely isotropic or orthorhombic (with additional assumptions on the symmetry axis and symmetry planes) ~\cite{HoopNakamuraZhai19,deHoop2019}.
Very recent results address the reconstruction procedure for anisotropic stiffness tensors~\cite{dehoop2023reconstruction}.

For the DN maps corresponding to the Calderón problems for conductivities and Riemannian and Lorentzian metrics, see Appendix~\ref{app:dn}.

\subsection{Acknowledgements}

This work was supported by the Research Council of Finland (Flagship of Advanced Mathematics for Sensing Imaging and Modelling grant 359208 and Centre of Excellence of Inverse Modelling and Imaging 353092 and other grants 351665, 351656, 358047).
We thank the referee for useful suggestions and feedback.



\section{Results}

All material parameters, metric tensors, and diffeomorphisms are assumed to be~$C^\infty$ throughout the paper.
The only possibly non-smooth function is the displacement field~$u$.

\subsection{Dirichlet-to-Neumann maps}
\label{sec:ThmConj}

The point of this paper is to identify the correct gauge freedom in the inverse boundary value problems posed above.
The natural gauges appear to be surprisingly different in Euclidean and Riemannian geometries, differing not only by changes of coordinates.

See section~\ref{sec:pf-def} for the definitions of the various pushforwards.

In the following we assume that the qP geometry is simple.
\begin{theorem}\label{thm:RiemConf}
Let $(M,g)$ be a Riemannian manifold of any dimension $n\geq2$ with boundary, let~$c$ be a stiffness tensor satisfying the symmetry and positivity conditions in \eqref{csym}--\eqref{cpos}, and let $\rho>0$ be the density of mass. Let $\phi\colon  M\to  M$ be a diffeomorphism fixing the boundary and let $\mu \neq 0$ be a function so that $\mu \vert_{\partial M}=1$. Then the DN map defined in~\eqref{DNmapG} satisfies
\begin{equation}\label{eq:gaugefreedoms}
    \Lambda_{\left(\mu^{\frac{n}{2+n}} \,\phi_* \rho, \mu \, \phi_* c, \mu^{-\frac{2}{2+n}} \, \phi_* g \right)}=\Lambda_{(\rho, c, g)}.
\end{equation}
\end{theorem}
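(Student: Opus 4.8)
The plan is to factor the single gauge transformation into two independent pieces and to verify each in turn. Writing the claimed identity \eqref{eq:gaugefreedoms} as
\begin{equation*}
\Lambda_{\lp\mu^{\frac{n}{2+n}}\,\phi_*\rho,\;\mu\,\phi_*c,\;\mu^{-\frac{2}{2+n}}\,\phi_*g\rp}=\Lambda_{(\phi_*\rho,\,\phi_*c,\,\phi_*g)}=\Lambda_{(\rho,c,g)},
\end{equation*}
the first equality is the conformal rescaling applied to the pushed-forward parameters (the hypothesis $\mu|_{\partial M}=1$ is unaffected by this reduction), while the second is invariance under a boundary-fixing diffeomorphism. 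It therefore suffices to prove two lemmas: a conformal lemma stating that for any admissible triple the map is unchanged under $(\rho,c,g)\mapsto(\mu^{\frac{n}{2+n}}\rho,\mu c,\mu^{-\frac{2}{2+n}}g)$ whenever $\mu|_{\partial M}=1$, and a diffeomorphism lemma stating that $\Lambda_{(\phi_*\rho,\phi_*c,\phi_*g)}=\Lambda_{(\rho,c,g)}$ when $\phi|_{\partial M}=\mathrm{id}$.

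For the conformal lemma the heart of the matter is that the wave operator is left literally unchanged. Writing $\tilde g=\mu^{-\frac{2}{2+n}}g$, $\tilde c=\mu c$ and $\tilde\rho=\mu^{\frac{n}{2+n}}\rho$, I would first record $\sqrt{\det\tilde g}=\mu^{-\frac{n}{2+n}}\sqrt{\det g}$ and substitute into \eqref{eq:EWop}. The exponents are rigged so that the three powers of $\mu$ occurring inside the divergence cancel exactly,
\begin{equation*}
-\frac{n}{2+n}+1-\frac{2}{2+n}=0,
\end{equation*}
so the factor $\sqrt{\det\tilde g}\,\tilde c^{ijk\ell}\tilde g_{\ell m}$ equals $\sqrt{\det g}\,c^{ijk\ell}g_{\ell m}$ and only the outer prefactor $1/\sqrt{\det\tilde g}=\mu^{\frac{n}{2+n}}/\sqrt{\det g}$ survives, giving $\Lop{}{\tilde c}{\tilde g}=\mu^{\frac{n}{2+n}}\Lop{}{c}{g}$. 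Since $\tilde\rho^{-1}=\mu^{-\frac{n}{2+n}}\rho^{-1}$, the density weight absorbs this factor exactly, $\Lop{\tilde\rho^{-1}}{\tilde c}{\tilde g}=\Lop{\rho^{-1}}{c}{g}$, whence $P_{(\tilde\rho,\tilde c,\tilde g)}=P_{(\rho,c,g)}$ as differential operators. The two instances of \eqref{REWE} thus share the same solution $u$ for every datum $f$.

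It then remains to compare the Neumann traces in \eqref{DNmapG}, and here the hypothesis $\mu|_{\partial M}=1$ does the work: on $\partial M$ one has $\tilde c=c$ and $\tilde g=g$, hence the same outward unit normal $\nu$, so the two expressions in \eqref{DNmapG} coincide as the same conormal contraction of $\partial_{x^k}u^m$. This proves the conformal lemma. For the diffeomorphism lemma I would invoke the coordinate-invariance of the wave operator and its traction established earlier: $v:=\phi_*u$ solves $P_{(\phi_*\rho,\phi_*c,\phi_*g)}v=0$, and since $\phi$ fixes the boundary pointwise, $v=f$ on $(0,T)\times\partial M$, so $v$ is the solution of the pushed-forward problem. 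Covariance of the traction map together with $\phi|_{\partial M}=\mathrm{id}$ then identifies the two Neumann data, and composing the two lemmas yields \eqref{eq:gaugefreedoms}.

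The main obstacle I anticipate lies in the Neumann data rather than in the PDE. The operator identity for the conformal part is a clean calculation, but one must pin down that the normal $\nu$ in \eqref{DNmapG} is the metric normal and confirm that the metric, and thus $\nu$, really is unchanged on $\partial M$ --- which is precisely where $\mu|_{\partial M}=1$ enters. For the diffeomorphism part the delicate point is that a boundary-fixing $\phi$ need not have trivial normal derivative at $\partial M$, so $\phi_*g$ may differ from $g$ in its normal components there; one must check that the traction, being covariant and paired against the tangential identity $D\phi|_{T\partial M}=\mathrm{Id}$, is nevertheless preserved. Establishing this invariance of the Neumann trace, and confirming that the two reductions compose with no leftover factor, is where I would concentrate the effort.
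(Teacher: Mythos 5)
Your proposal follows essentially the same route as the paper: it splits the gauge into a conformal rescaling, verified by the same exponent cancellation that underlies Lemma~\ref{lem:fullPDE} and equation~\eqref{eq:derivFullRiem}, together with diffeomorphism invariance from part~(\ref{part:rho-c-g}) of Proposition~\ref{prop:CoordInvar}, with $\mu|_{\partial M}=1$ and $\phi|_{\partial M}=\mathrm{id}$ handling the Neumann traces. The only divergence is that the paper dispatches the boundary-jet subtlety you flag by taking $\phi$ to be the identity in a whole neighborhood of $\partial M$, so that $D\phi=I$ there and all pushed-forward quantities agree with the originals near the boundary.
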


The previous observation concerns gauge freedoms in the DN map and on the full level of the PDE. The following recent observation concerns gauge freedoms on the principal level of the PDE:
\begin{remark}[{\cite{IlmavirtaCocan}}]\label{rem:Jonne}
Let $\Omega \subset \R^d$ with $d=2$ or $d=3$ be a bounded, smooth, connected, and simply connected domain. There is an open and dense set~$W$ of density-normalized stiffness tensor fields on~$\Omega$ so that if $a\in W$ and $\phi^* F_a^{qP}=F_b^{qP}$ with a diffeomorphism $\phi\colon\bar\Omega\to\bar\Omega$ fixing the boundary, then $\phi=\operatorname{id}$ and $b=a$.
\end{remark}
The core of the proof of Remark~\ref{rem:Jonne} is to show that~$\phi$ has to be conformal.
Then it follows from fixing the boundary that it has to be the identity.

Note that the Finsler metric $F_a^{qP}$ is linked to the Christoffel matrix and thus the principal symbol of the elastic wave operator $P_{(\rho,c)}$. Therefore this result suggests that the principal behaviour of $P_{(\rho,c)}$ determines $a=\rho^{-1}c$; cf. part~(\ref{part:c}) of Proposition~\ref{prop:CoordInvar}. Information about the density is contained in lower order terms; cf. Lemma~\ref{lem:fullPDE}.

Using the fact that only conformal diffeomorphisms preserve the principal behavior of the elastic wave operator, we can derive the Euclidean version of Theorem~\ref{thm:RiemConf}: 
In the Euclidean setting $g_1=g_2=g_E$, so the metric component of the conclusion of Conjecture~\ref{conj:R} becomes $g_E=\mu^{-2/(2+n)}\phi_*g_E$.
This makes~$\phi$ a conformal map of $(\bar\Omega,g_E)$ fixing the boundary, which in fact forces~$\phi$ to be the identity map\footnote{Suppose there is a conformal map $\phi\colon\bar\Omega\to\bar\Omega$ fixing the boundary. By the Riemann mapping theorem there is a conformal map $\eta\colon D\to\Omega$ from the unit disc, and by smoothness of~$\Omega$ it extends to a map $\bar\eta\colon\bar D\to\bar\Omega$. Thus $\bar\eta^{-1}\circ\phi\circ\bar\eta\colon\bar D\to\bar D$ is a conformal map of the disc fixing its boundary. But the conformal self-maps of the disc are the M\"obius transformations, and it is easily checked that only the trivial one fixes the boundary. Thus $\bar\eta^{-1}\circ\phi\circ\bar\eta=\operatorname{id}_D$ and $\phi=\operatorname{id}_\Omega$.} and $\mu=1$. This yields:

\begin{corollary}\label{cor:EucConf}
Let $\Omega\subset\R^d$ with $d=2$ or $d=3$ be a bounded, smooth, connected, and simply connected domain, let~$c$ be a stiffness tensor field satisfying the symmetry and positivity conditions in \eqref{csym}--\eqref{cpos}, and let $\rho>0$ be the density of mass. Then the gauge freedoms for the DN map $\Lambda_{\left(\rho,c \right)}$ by a diffeomorphism $\phi$ or a function $\mu$ in \eqref{eq:gaugefreedoms} are eliminated.
\end{corollary}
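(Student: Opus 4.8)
The plan is to apply the single Riemannian gauge of Theorem~\ref{thm:RiemConf} and then impose that the transformed problem still live in Euclidean geometry. Specializing to $(\bar\Omega,g_E)$, the transformed triple is $(\mu^{n/(2+n)}\phi_*\rho,\ \mu\,\phi_*c,\ \mu^{-2/(2+n)}\phi_*g_E)$. Since the metric encodes the ambient space and not the material, it is pinned to $g_E$; a gauge can therefore produce a genuinely Euclidean DN datum only if its metric slot is again $g_E$, i.e.
\[
\mu^{-2/(2+n)}\phi_*g_E=g_E,
\qquad\text{equivalently}\qquad
\phi_*g_E=\mu^{2/(2+n)}g_E.
\]
As $\phi_*g_E=(\phi^{-1})^*g_E$, this is exactly the statement that $\phi$ is a conformal diffeomorphism of $(\bar\Omega,g_E)$ fixing the boundary, with conformal factor governed by $\mu$.

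Next I would eliminate $\phi$ using conformal rigidity, separately in the two dimensions. For $d=2$ this is the Riemann-mapping argument recorded in the footnote above: transport $\phi$ through a smooth conformal uniformization $\bar\eta\colon\bar D\to\bar\Omega$ to a conformal self-map of the closed disc fixing its boundary, and note that the only such M\"obius transformation is the identity. For $d=3$ planar flexibility is unavailable, and one instead invokes Liouville's theorem: every conformal map between domains of $\R^3$ is the restriction of a M\"obius transformation of $\widehat{\R^3}=\R^3\cup\{\infty\}$, and a M\"obius transformation that fixes the closed surface $\partial\Omega$ pointwise and maps $\Omega$ onto itself must be the identity. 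In either case $\phi=\operatorname{id}$.

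With $\phi=\operatorname{id}$ the conformality relation degenerates to $\mu^{-2/(2+n)}g_E=g_E$, so $\mu^{-2/(2+n)}\equiv1$ and hence $\mu\equiv1$; here positivity of the transformed density $\mu^{n/(2+n)}\rho$ forces $\mu>0$, removing any sign ambiguity in the fractional power. Feeding $\phi=\operatorname{id}$ and $\mu=1$ back into the transformed triple recovers $(\rho,c,g_E)$ verbatim, so the only gauge of the form \eqref{eq:gaugefreedoms} consistent with Euclidean geometry is the trivial one---precisely the claimed elimination.

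I expect the conformal-rigidity step to be the crux, and it is genuinely dimension-sensitive: in $d=2$ conformal maps form an infinite-dimensional family, so the boundary-fixing hypothesis together with the Riemann mapping theorem is indispensable, whereas in $d=3$ the much stronger Liouville rigidity does the work directly. Some care at the boundary is also required, since one must use the smoothness of $\partial\Omega$ to extend the relevant conformal maps continuously up to the boundary for the phrase ``fixing $\partial\Omega$'' to carry content.
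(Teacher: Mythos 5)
Your proof follows essentially the same route as the paper: impose $\mu^{-2/(2+n)}\phi_*g_E=g_E$, conclude that $\phi$ is a boundary-fixing conformal self-map of $(\bar\Omega,g_E)$ and hence the identity, and then deduce $\mu=1$. The only difference is that you supply an explicit Liouville-theorem argument for $d=3$, whereas the paper's footnote records only the Riemann-mapping argument for $d=2$; this is a welcome completion of a detail the paper leaves implicit.
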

As Remark~\ref{rem:Jonne} implies that the principal behavior of $P_{(\rho,c)}$ determines $a=\rho^{-1}c$ uniquely, Corollary~\ref{cor:EucConf} implies that the most likely gauge freedom for the full behavior of $P_{(\rho,c)}$ is eliminated and we usually expect more gauge freedom on the principal level than the full level (cf. Table~\ref{tab:ConformalFreedoms}), we find it likely that the following result holds true:
\begin{conjecture}\label{conj:E}
Let $\Omega\subset\R^d$ with $d=2$ or $d=3$ be a bounded, smooth, connected, and simply connected domain.
Let $\rho_i>0$ be density fields and~$c_i$ be generic stiffness tensor fields that satisfy the symmetry and positivity conditions in \eqref{csym}--\eqref{cpos} for $i=1,2$.
If the DN maps defined in~\eqref{DNmapE} satisfy $\Lambda_{(\rho_1,c_1)}=\Lambda_{(\rho_2,c_2)}$, then $\rho_1=\rho_2$ and $c_1=c_2$.
\end{conjecture}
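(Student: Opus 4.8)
\emph{Overall strategy.} The plan is to exploit the splitting of the wave operator $P_{(\rho,c)}=\partial_t^2-\rho^{-1}\Delta_c$ into a principal part governed by $a=\rho^{-1}c$ and a first-order part carrying the density. I would first recover $a$ from the principal behaviour of the DN map, then recover $\rho$ from the first-order terms once $a$ is known, and finally set $c=\rho a$. As a preliminary step I would use that, microlocally in the elliptic region, $\Lambda_{(\rho,c)}$ is a pseudodifferential operator on $(0,T)\times\partial\Omega$ whose full symbol determines the Taylor jets of $\rho$ and $c$ at $\partial\Omega$; from $\Lambda_{(\rho_1,c_1)}=\Lambda_{(\rho_2,c_2)}$ this yields agreement of the parameters to infinite order on $\partial\Omega$, and in particular $\rho_1=\rho_2$ on $\partial\Omega$, which later fixes the one remaining integration constant.

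\emph{Stage 1: recovering $a$.} The singularities of the Schwartz kernel of $\Lambda_{(\rho,c)}$ propagate along the bicharacteristics of $P_{(\rho,c)}$, whose principal symbol $\omega^2[\Gamma(x,p)-I]$ depends on $(\rho,c)$ only through $a$. Isolating the fastest (qP) sheet --- well separated from the slower qS sheets --- I would read off the qP scattering relation and first-arrival times, i.e. the lens data of the Finsler metric $F_a^{qP}$. Under the standing simplicity assumption these data determine $F_a^{qP}$ up to a boundary-fixing diffeomorphism, so that $\phi^*F_{a_1}^{qP}=F_{a_2}^{qP}$ for some $\phi$ fixing $\partial\Omega$. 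Since the $c_i$ are generic we may take $a_1\in W$, and Remark~\ref{rem:Jonne} then forces $\phi=\mathrm{id}$ and $a_1=a_2=:a$ (in line with Corollary~\ref{cor:EucConf} and Proposition~\ref{prop:CoordInvar}).

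\emph{Stage 2: recovering $\rho$.} With $a$ fixed I would write $c_i=\rho_i a$, so that $\rho_i^{-1}\Delta_{c_i}=\Delta_a+B_{\rho_i}$ with the first-order operator $(B_\rho u)_i=(\partial_{x_j}\log\rho)\,a_{ijk\ell}\,\partial_{x_k}u_\ell$; the two wave operators then share the principal part $\partial_t^2-\Delta_a$ and differ only through $q:=\log(\rho_1/\rho_2)$. Pairing a forward solution $u_1$ of $P_{(\rho_1,c_1)}u_1=0$ against a backward solution $w$ of $P_{(\rho_2,c_2)}w=0$ and using that the (symmetric) DN maps coincide, the boundary contributions in Green's identity cancel and I am left with the interior identity
\[
\int_0^T\!\!\int_\Omega (\rho_1-\rho_2)\big[\,u_1\cdot\partial_t^2 w + a_{ijk\ell}\,\partial_{x_k}u_{1,\ell}\,\partial_{x_j}w_i\,\big]\,dx\,dt=0
\]
for all such $u_1,w$. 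Inserting polarized qP Gaussian-beam solutions concentrated along a common bicharacteristic and localizing by stationary phase reduces this to a weighted ray transform of $\rho_1-\rho_2$ along the qP geodesics of $F_a^{qP}$; simplicity makes this transform injective, and together with $\rho_1=\rho_2$ on $\partial\Omega$ this gives $\rho_1\equiv\rho_2$, whence $c_1=\rho_1a=\rho_2a=c_2$. This is exactly the lower-order mechanism anticipated by Lemma~\ref{lem:fullPDE}.

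\emph{Main obstacle.} The delicate point is the rigidity input in Stage~1: upgrading the qP lens data extracted from $\Lambda$ to the Finsler metric $F_a^{qP}$ itself. Finsler boundary/lens rigidity is open in general, and even the clean microlocal separation of the qP sheet from the qS sheets (mode conversion, and possible crossing or tangency of characteristic speeds) has to be justified with care; this is where I expect the real difficulty to lie. By comparison, Stage~2 is comparatively routine once $a$ is known, its only genuine subtleties being the construction of suitably polarized elastic Gaussian beams and the invertibility of the resulting ray transform.
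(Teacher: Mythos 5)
The statement you are trying to prove is presented in the paper as Conjecture~\ref{conj:E}: the authors give no proof of it, only a heuristic justification (Remark~\ref{rem:Jonne} says the principal symbol determines $a=\rho^{-1}c$ for generic $a$, Corollary~\ref{cor:EucConf} eliminates the candidate diffeomorphism/conformal gauge, and Table~\ref{tab:ConformalFreedoms} shows no scaling freedom survives at the full-PDE level in the Euclidean case). Your two-stage program --- recover $a$ from the principal behaviour, then recover $\rho$ from the first-order terms, mirroring Rachele's isotropic scheme which the authors explicitly cite as the model --- is exactly the route the paper gestures at, so in spirit you are aligned with the authors' intent. But it is a research program, not a proof, and the gaps are substantive.

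Concretely: (i) Stage~1 needs the qP lens data extracted from $\Lambda_{(\rho,c)}$ to determine $F_a^{qP}$ up to a boundary-fixing diffeomorphism. This is a Finsler boundary/lens rigidity statement that is open, and for general Finsler metrics such rigidity is known to fail without extra hypotheses; the paper's simplicity assumption does not supply it. Only after this step does Remark~\ref{rem:Jonne} apply. (ii) The microlocal isolation of the qP sheet presupposes uniform separation of the qP branch from the qS branches and control of mode conversion at $\partial\Omega$; for generic anisotropic $c$ neither is automatic, and the paper offers nothing here. (iii) In Stage~2, $P_{(\rho,c)}$ is symmetric only with respect to the density-weighted pairing $\rho\,dx$, and your interior identity mixes the two densities; done correctly one lands on a ray transform with a matrix, polarization-dependent weight along qP Finsler geodesics, whose injectivity is not established in this generality. (iv) The preliminary boundary-determination step (that the full symbol of $\Lambda$ as a pseudodifferential operator determines the jets of $\rho$ and $c$ on $\partial\Omega$) is itself nontrivial for fully anisotropic $c$ and cannot simply be asserted. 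None of (i)--(iv) can currently be cited as known results, which is precisely why the paper states this as a conjecture rather than a theorem.
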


As a consequence we also find it likely that the gauge freedom found in Theorem~\ref{thm:RiemConf} is indeed the whole gauge in the Riemannian setting:

\begin{conjecture}\label{conj:R}
In the setting of Theorem~\ref{thm:RiemConf} with generic stiffness tensor fields~$c_i$, density fields~$\rho_i$ and Riemannian metrics~$g_i$ the following holds:
If $\Lambda_{(\rho_1,c_1,g_1)}=\Lambda_{(\rho_2,c_2,g_2)}$, then $(\rho_2,c_2,g_2)=\left(\mu^{\frac{n}{2+n}} \,\phi_* \rho_1, \mu \, \phi_* c_1, \mu^{-\frac{2}{2+n}} \, \phi_* g_1\right)$ for some function~$\mu$ and diffeomorphism~$\phi$.
\end{conjecture}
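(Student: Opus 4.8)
The plan is to establish the uniqueness direction by a three-stage reconstruction: a boundary determination, a geometric reconstruction of the interior up to the diffeomorphism $\phi$ and the conformal factor $\mu$ at the principal level, and a lower-order argument that fixes the density so that the three relations close up consistently. Throughout I would use that the gauge in Theorem~\ref{thm:RiemConf} acts trivially on $\partial M$ (since $\phi$ fixes the boundary and $\mu|_{\partial M}=1$), so no gauge ambiguity can hide in boundary data. For the boundary step, I would regard the hyperbolic DN map $\Lambda_{(\rho,c,g)}$ microlocally as a matrix-valued pseudodifferential operator whose full symbol near $(0,T)\times\partial M$ is computed from a factorization of $P_{(\rho,c,g)}$ into half-wave operators, and extract from it the full Taylor jet of $(\rho,c,g)$ along $\partial M$ in boundary normal coordinates. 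Since $\Lambda_{(\rho_1,c_1,g_1)}=\Lambda_{(\rho_2,c_2,g_2)}$ and the gauge is trivial on $\partial M$, all boundary jets of the two parameter sets must agree, and the two manifolds can be identified near the boundary.

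\textbf{Interior geometry.} The principal symbol of $P_{(\rho,c,g)}$ is governed by the Christoffel matrix $\Gamma(x,p)=a^{ijk\ell}g_{\ell m}p_jp_k$, whose eigenvalues give the polarization-wise wave speeds and, under the simplicity assumption, the associated Finsler metrics ($F_a^{qP}$ for the fastest waves). I would show that equality of the DN maps determines, via propagation of singularities and the resulting lens/scattering relation, the boundary distance data for each polarization, and then invoke a boundary rigidity theorem for simple Finsler metrics to produce a boundary-fixing diffeomorphism $\phi$ identifying these geometries. After replacing the first system by its $\phi$-pushforward (cf. the coordinate invariance in Proposition~\ref{prop:CoordInvar}), one may assume $\Gamma_1(x,p)=\Gamma_2(x,p)$ for all $(x,p)$. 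At this point the generic analog of Remark~\ref{rem:Jonne} enters: because $\Gamma$ is exactly the gauge-invariant combination $a^{ijk\ell}g_{\ell m}$, knowing it for generic $c$ should recover the pair $(a,g)$ up to the single scalar freedom $a\mapsto\lambda^{-1}a$, $g\mapsto\lambda g$. Writing $\lambda=\mu^{-2/(2+n)}$ reproduces precisely $a_2=\mu^{2/(2+n)}a_1$ and $g_2=\mu^{-2/(2+n)}g_1$ for some function $\mu$ with $\mu|_{\partial M}=1$.

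\textbf{Density and closure.} It remains to recover $\rho$, that is, to verify that the density relation $\rho_2=\mu^{n/(2+n)}\rho_1$ holds with the same $\mu$ obtained at the principal level; the relation $c_2=\mu\,c_1$ then follows from $a=\rho^{-1}c$. For this I would use the lower-order content of the DN map: by Lemma~\ref{lem:fullPDE} the density and the first derivatives of $c$ reside in the subprincipal symbol, which is read off from the next term of the symbol expansion of $\Lambda$ (equivalently, from the first transport equation for wave amplitudes along the bicharacteristics). Matching this subprincipal data yields the one remaining scalar equation between $\rho_1$ and $\rho_2$, which I expect to reduce exactly to $\rho_2=\mu^{n/(2+n)}\rho_1$, closing the argument.

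\textbf{Main obstacle.} The decisive and genuinely open step is the boundary rigidity used above: passing from the DN map to boundary distance/lens data and inverting it to recover the Finsler geometry up to a boundary-fixing diffeomorphism. Even in the Riemannian case this is known only under simplicity and in restricted dimensions, and the anisotropic elastic problem adds the difficulties of several interacting polarizations, mode conversion, and possible conjugate points. The genericity hypothesis on $c$ is what renders the algebraic recovery of $(a,g)$ from $\Gamma$ and the subsequent density recovery tractable, and is the reason the statement is posed as a conjecture rather than proved outright.
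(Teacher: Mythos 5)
This statement is Conjecture~\ref{conj:R}: the paper deliberately offers no proof of it, only plausibility evidence (Remark~\ref{rem:Jonne}, Corollary~\ref{cor:EucConf}, and the heuristic comparison of principal versus full gauge freedoms in Table~\ref{tab:ConformalFreedoms}), so there is no paper proof to match yours against. Your proposal, as you yourself concede in the ``main obstacle'' paragraph, is a research program rather than a proof: the pivotal step --- passing from equality of DN maps to lens/boundary-distance data and invoking a boundary rigidity theorem for the simple elastic Finsler geometry, with several interacting polarizations and mode conversion --- is genuinely open, which is exactly why the paper poses the statement as a conjecture. Two further steps are asserted rather than established. First, the algebraic claim that knowing $\Gamma(x,p)=a^{ijk\ell}g_{\ell m}p_jp_k$ for generic $c$ recovers $(a,g)$ up to the single scaling $a\mapsto\lambda^{-1}a$, $g\mapsto\lambda g$: Remark~\ref{rem:Jonne} concerns only the qP Finsler metric in $d=2,3$ with the Euclidean background, not the full Christoffel matrix on an arbitrary Riemannian manifold in dimension $n$, so this factorization-uniqueness statement would itself need proof. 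Second, the claim that matching subprincipal data ``reduces exactly to $\rho_2=\mu^{n/(2+n)}\rho_1$'' is flagged with ``I expect'' and is not derived; Lemma~\ref{lem:fullPDE} shows where the density sits, but extracting it from the DN map is precisely the unresolved lower-order recovery problem.

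One step is not merely open but incorrect as stated: the boundary determination. You claim that because the gauge acts trivially on $\partial M$, the full Taylor jets of $(\rho,c,g)$ along the boundary are determined by the DN map and must agree for the two parameter sets. But Theorem~\ref{thm:RiemConf} only requires $\mu\vert_{\partial M}=1$; the normal derivatives of $\mu$ at $\partial M$ may be nonzero, and since $P_{(\mu^{n/(2+n)}\rho,\,\mu c,\,\mu^{-2/(2+n)}g)}=P_{(\rho,c,g)}$ holds identically in $M$, such a $\mu$ yields the same DN map while changing the first-order (and higher) boundary jets of all three parameters. Hence the DN map can determine the boundary jet of $(\rho,c,g)$ at best modulo the jet of the gauge, and the assertion that ``all boundary jets of the two parameter sets must agree'' contradicts the very gauge freedom the conjecture is built around. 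A correct argument must carry this jet ambiguity through the boundary step and normalize it consistently with the interior choice of $\mu$ and $\phi$.
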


Note how the distinction between principal and full behavior and recovery of the normalized stiffness tensor $a=\rho^{-1}c$ and the full stiffness tensor should be compared with Rachele's work~\cite{Rachele001,RACHELE002,Rachele03} in the isotropic\footnote{The stiffness tensor of an isotropic material enjoys more symmetry than we assumed, and it can be parametrized by the two Lamé parameters~$\lambda$ and~$\mu$ as $c_{ijk\ell}=\lambda \delta_{ij}\delta_{k\ell}+\mu (\delta_{ik}\delta_{j\ell}+\delta_{i\ell}\delta_{jk})$. The wave speeds for pressure and shear waves are $c_P=\sqrt{(\lambda+2\mu)/\rho}$ and $c_S=\sqrt{\mu/\rho}$.} setting:

\begin{enumerate}
    \item The DN map determines the travel times between boundary points for both wave speeds~$c_P$ and~$c_S$.
    \item If the manifolds $(\bar\Omega,c_{P/S}^{-2}g_E)$ are simple, then this geometric information determines the Riemannian metrics uniquely.
    \item The metrics are known to be conformally Euclidean, which eliminates the coordinate gauge freedom intrinsic to a Riemannian manifold.
    Thus the DN map determines~$c_P$ and~$c_S$.
    \item The density~$\rho$ is a lower order term and can be recovered from the DN map via a ray transform.
\end{enumerate}


\subsection{Various pushforwards}
\label{sec:pf-def}

We define various different kinds of pushforwards, some of which are somewhat non-standard.
Let $U,V\subset\R^n$ be any open sets and let $\phi\colon U\to V$ be a diffeomorphism.

For the stiffness tensor $c^{ijk\ell}$, the metric tensor $g_{ij}$ and the density $\rho$ we define the pushforwards $\phi_*c$, $\phi_*g$, and $\phi_*\rho$ in the usual fashion:
\begin{align}
\label{eq:pf-c/rhocg}
(\phi_* c)^{\hat{i}\hat{j}\hat{k}\hat{\ell}}
&=
\lp
c^{ijk\ell}
(D\phi)_{\hspace{2mm} i}^{\hat{i}}
(D\phi)_{\hspace{2mm} j}^{\hat{j}}
(D\phi)_{\hspace{2mm} k}^{\hat{k}}
(D\phi)_{\hspace{2mm} \ell}^{\hat{\ell}} 
\rp
\circ \phi^{-1}
,
\\
\label{eq:pf-g/rhocg}
(\phi_* g)_{\hat{i}\hat{j}}
&=
\lp
g_{ij}
(D\phi^{-1})_{\hspace{2mm} \hat{i}}^{i}
(D\phi^{-1})_{\hspace{2mm} \hat{j}}^{j}
\rp
\circ \phi^{-1}
,
\\
\label{eq:pf-rho/rhocg}
\phi_* \rho
&=
\rho
\circ \phi^{-1}
.
\end{align}
All the familiar formulas from differential geometry hold for these standard pushforwards --- or, equivalently, pullbacks over~$\phi^{-1}$.

When the map~$\phi$ is conformal with respect to the Euclidean metric, we define
\begin{align}
\label{eq:pf-c/rhoc}
(\phi_\pfcrho c)^{\hat{i}\hat{j}\hat{k}\hat{\ell}}
&=
\lp
\det(D\phi)^{-1-2/n}
c^{ijk\ell}
(D\phi)_{\hspace{2mm} i}^{\hat{i}}
(D\phi)_{\hspace{2mm} j}^{\hat{j}}
(D\phi)_{\hspace{2mm} k}^{\hat{k}}
(D\phi)_{\hspace{2mm} \ell}^{\hat{\ell}} 
\rp
\circ \phi^{-1}
,
\\
\label{eq:pf-rho/rhoc}
\phi_\pfcrho \rho
&=
\lp
\det(D\phi)^{-1}
\rho
\rp
\circ \phi^{-1}
\end{align}
and
\begin{align}
\label{eq:pf-c/c}
(\phi_\pfa c)^{\hat{i}\hat{j}\hat{k}\hat{\ell}}
&=
\lp
\det(D\phi)^{-2/n}
c^{ijk\ell}
(D\phi)_{\hspace{2mm} i}^{\hat{i}}
(D\phi)_{\hspace{2mm} j}^{\hat{j}}
(D\phi)_{\hspace{2mm} k}^{\hat{k}}
(D\phi)_{\hspace{2mm} \ell}^{\hat{\ell}} 
\rp
\circ \phi^{-1}
.
\end{align}
It is important to note that the usual pushforward~$\phi_*$ is defined on the triplet $(\rho,c,g)$, the pushforward~$\phi_\pfcrho$ on the pair $(\rho,c)$ and pushforward~$\phi_\pfa$ only on~$c$.

\begin{remark}
If we want to define a pushforward over a possibly non-conformal diffeomorphism~$\phi$, then the definitions of~$\phi_\pfcrho (c,\rho)$ and~$\phi_\pfa c$ need to be adjusted.
The correct formulas are obtained by treating the stiffness tensor as the mixed rank object $c^{ijk}_{\phantom{ijk}\ell}$ and applying the usual formulas from differential geometry.
Then the EWE is indeed invariant, but the issue is that the so obtained~$\phi_\pfcrho c$ no longer satisfies the symmetry conditions. It is shown in the proof of Remark~\ref{rem:Jonne} in \cite{IlmavirtaCocan} that only conformal distortions can give rise to a valid slowness polynomial that corresponds to a stiffness tensor with the required symmetries.
For a conformal map~$\phi$ this pushforward does preserve symmetry, and it gives rise exactly to our equation~\eqref{eq:pf-c/rhoc}.

One way to see this is as follows:
The usual Euclidean EWE is not coordinate invariant because it is tied to the Euclidean concept of distance.
Conformal coordinate invariance only follows because Lemma~\ref{lem:fullPDE} allows shifting scalar factors within the triplet $(\rho,c,g)$.
\end{remark}

\begin{remark}
\label{rmk:cov-div}
The elastic wave operator~\eqref{eq:EWop} on a Riemannian manifold can be written as

\begin{equation}\label{eq:EWop2}
(\Lop{}{c}{g} u(t,x))^i
=
\nabla_{x^j}
\lp
c^{ijk\ell} g_{\ell m}
\nabla_{x^k} u^m(t,x) 
\rp
\end{equation}
using covariant derivatives.
Formula \eqref{eq:EWop} and \eqref{eq:EWop2} are equivalent; the only difference is in the coordinate representation of the covariant divergence. Note that these two ways to define the elastic wave operator correspond to the following equivalent definition for the Laplace-Beltrami operator: $\Delta_g f=\frac{1}{\sqrt{\det g}}\partial_{x^i}\left(\sqrt{\det g} g^{ij} \partial_{x^j} f\right)$ and $\Delta_g f=\nabla_{x^i}\left(g^{ij} \nabla_{x^j} f \right)$ respectively.
\end{remark}

\subsection{Invariance of the elastic wave equation}

The following results concern local behavior of the EWE where the boundary is disregarded. These results address both the Euclidean and Riemannian setting and the global results can be derived from these.

By Remark~\ref{rem:Jonne}, in most cases any diffeomorphism that preserves the Finsler metric arising from elasticity in dimension two and three has to be conformal.
In the following proposition we write down explicitly how the Euclidean pushforward~$\phi_{\pfa} c$ by a diffeomorphism $\phi\colon M \to M$ is defined so that the principal behavior of the Euclidean EWE is preserved.

\begin{proposition}
\label{prop:CoordInvar}
The Euclidean and Riemannian elastic wave equations have the following invariance properties:
\begin{enumerate}
\item\label{part:rho-c-g}
$P_{(\rho,c,g)}=\phi^*P_{(\phi_* \rho,\phi_* c,\phi_* g)}\phi_*$ for any diffeomorphism~$\phi$.
\item\label{part:rho-c}
$P_{(\rho,c)}=\phi^*P_{(\phi_\pfcrho \rho,\phi_\pfcrho c)}\phi_*$ for any conformal map~$\phi$.
\item\label{part:c}
$\sigma(P_{(\rho,c)})=\sigma(P_{(1,\rho^{-1}c)})=\phi^*\sigma(P_{(1,\phi_\pfa (\rho^{-1}c))})\phi_*$ for any conformal map~$\phi$.
\end{enumerate}
\end{proposition}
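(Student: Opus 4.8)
The plan is to treat part~(\ref{part:rho-c-g}) as the foundation and to deduce parts~(\ref{part:rho-c}) and~(\ref{part:c}) from it. Since the conjugation $\phi^*(\,\cdot\,)\phi_*$ and the operator $P$ act trivially in the time variable, and since $\phi^*=(\phi_*)^{-1}$ on spatial vector fields, the identity in part~(\ref{part:rho-c-g}) is equivalent to $\phi_*\lp P_{(\rho,c,g)}u\rp=P_{(\phi_*\rho,\phi_*c,\phi_*g)}(\phi_* u)$. The operator $\partial_t^2$ commutes with the spatial pushforward, and $\phi_*$ distributes over the scalar factor as $\phi_*(\rho^{-1}\,\cdot\,)=(\phi_*\rho)^{-1}\phi_*(\,\cdot\,)$, so everything reduces to the naturality of the elastic Laplacian, namely $\phi_*\lp\Lop{}{c}{g}u\rp=\Lop{}{\phi_*c}{\phi_*g}(\phi_* u)$.

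To prove this naturality I would work from the covariant form~\eqref{eq:EWop2} rather than~\eqref{eq:EWop}, which avoids having to track the $\sqrt{\det g}$ weight by hand. In that form the elastic Laplacian is assembled entirely from geometrically natural operations: the Levi-Civita covariant derivative of $g$, the index lowering by $g$, the contraction against $c$, and a final covariant divergence. Each of these commutes with pushforward once the metric is simultaneously replaced by $\phi_*g$: the Levi-Civita connection is natural, so $\nabla^{\phi_*g}$ is the pushforward of $\nabla^{g}$, and tensor contractions commute with pushforward by definition. Composing these naturalities gives the displayed identity. The main care here is purely bookkeeping---verifying that the contraction pattern $c^{ijk\ell}g_{\ell m}\nabla_k u^m$ transforms as the mixed tensor it is---so I expect this step, rather than parts~(\ref{part:rho-c}) and~(\ref{part:c}), to be where the delicate index tracking lives.

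For part~(\ref{part:rho-c}) I would start from part~(\ref{part:rho-c-g}) with $g=g_E$ and combine it with two computations. First, a direct calculation shows that for conformal $\phi$ the pushforward $\phi_*g_E$ is again conformally Euclidean, with conformal factor a power of $\det D\phi$ (concretely $\abs{\det D\phi}^{-2/n}\circ\phi^{-1}$), since $D\phi$ is a scalar multiple of an orthogonal matrix at each point. Second, I invoke Lemma~\ref{lem:fullPDE}, which lets me shift a scalar conformal factor out of the metric slot and into $\rho$ and $c$; explicitly $P_{(\rho,c,\lambda g)}=P_{(\lambda^{n/2}\rho,\lambda^{1+n/2}c,g)}$, as one reads off from~\eqref{eq:EWop} using $\det(\lambda g)=\lambda^n\det g$. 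Applying this with $\lambda=\abs{\det D\phi}^{-2/n}\circ\phi^{-1}$ to $P_{(\phi_*\rho,\phi_*c,\phi_*g_E)}$ returns the metric to $g_E$, and the remaining task is to check that the accumulated powers of $\det D\phi$ match the definitions~\eqref{eq:pf-c/rhoc} and~\eqref{eq:pf-rho/rhoc} of $\phi_\pfcrho$; the exponents $\lambda^{n/2}\phi_*\rho=\phi_\pfcrho\rho$ and $\lambda^{1+n/2}\phi_*c=\phi_\pfcrho c$ work out exactly.

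Finally, part~(\ref{part:c}) is the principal-symbol shadow of the previous parts. The first equality $\sigma(P_{(\rho,c)})=\sigma(P_{(1,\rho^{-1}c)})$ is immediate from the explicit expression for the principal symbol recorded earlier, which depends on $\rho$ and $c$ only through $a=\rho^{-1}c$. For the second equality I would apply the principal-symbol functor to part~(\ref{part:rho-c}): principal symbols are functorial under conjugation by the change of variables $\phi$, so $\sigma\lp\phi^*P_{(\phi_\pfcrho\rho,\phi_\pfcrho c)}\phi_*\rp=\phi^*\sigma\lp P_{(\phi_\pfcrho\rho,\phi_\pfcrho c)}\rp\phi_*$ in the transported sense of the statement. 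Using the first equality on the pushed-forward pair together with the algebraic identity $(\phi_\pfcrho\rho)^{-1}\,\phi_\pfcrho c=\phi_\pfa(\rho^{-1}c)$---a one-line check comparing the determinant powers in~\eqref{eq:pf-c/rhoc}, \eqref{eq:pf-rho/rhoc} and~\eqref{eq:pf-c/c}---rewrites $\sigma(P_{(\phi_\pfcrho\rho,\phi_\pfcrho c)})$ as $\sigma(P_{(1,\phi_\pfa a)})$, which is exactly the claimed right-hand side.
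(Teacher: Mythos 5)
Your proposal is correct and follows essentially the same route as the paper: part~(\ref{part:rho-c-g}) by naturality of the tensorial/covariant formulation, part~(\ref{part:rho-c}) by computing $\phi_*g_E=\det(D\phi)^{-2/n}g_E$ and then using Lemma~\ref{lem:fullPDE} (your packaged identity $P_{(\rho,c,\lambda g)}=P_{(\lambda^{n/2}\rho,\lambda^{1+n/2}c,g)}$ is exactly its $Q=0$ case, and the exponents do match $\phi_\pfcrho$), and part~(\ref{part:c}) via the dependence of the symbol on $a=\rho^{-1}c$ together with $(\phi_\pfcrho\rho)^{-1}\phi_\pfcrho c=\phi_\pfa(\rho^{-1}c)$, which is the paper's identity $\phi_\pfa c=(\phi_\pfcrho 1)^{-1}\phi_\pfcrho c$ in slightly more general form.
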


The pullbacks~$\phi^*$ and pushforwards~$\phi_*$ surrounding the operators simply correspond to how the solution vector fields transform.

We only prove that in the Euclidean setting conformal diffeomorphisms preserve the desired structure of the stiffness tensor.
For the other direction --- that only conformal maps preserve such the symmetry --- we refer to the theorems and conjectures of section~\ref{sec:ThmConj}.




Proposition~\ref{prop:CoordInvar} is largely based on the following lemma on scaling freedoms which may also be of independent interest:

\begin{lemma}
\label{lem:fullPDE}
The operator~$\Lop{}{c}{g}$ satisfies the following relationship for the scalar functions~$\mu$ and~$\lambda$ which satisfy $\mu\neq 0$ and $\lambda \neq 0$:
\begin{equation*}
    (\lambda \mu\rho)^{-1}(\Lop{}{\mu c}{ \lambda g})^i_{\hspace{1mm}m}= \rho^{-1}(\Lop{}{c}{g})^i_{\hspace{1mm}m} + Q^i_{\hspace{1mm}m}, 
\end{equation*}
where
\begin{equation*}
    Q^i_{\hspace{1mm}m}=\lambda^{-\frac{2+n}{2}} \mu^{-1} \rho^{-1} c^{ijk\ell}\, g_{\ell m}  \partial_{x^j} \lp \mu\lambda^{\frac{2+n}{2}}\rp \partial_{x^k}.
\end{equation*}
\end{lemma}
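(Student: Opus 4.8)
The plan is to prove the identity by a direct computation, expanding the scaled operator~$\Lop{}{\mu c}{\lambda g}$ and isolating the lower-order correction by a single application of the Leibniz rule. Everything is read as an identity between differential operators acting on vector fields, as indicated by the trailing~$\p_{x^k}$ in the definition of~$Q$. First I would record how the two scalings enter the defining formula~\eqref{eq:EWop}: scaling the metric as $g\mapsto\lambda g$ multiplies the volume factor by $\det(\lambda g)^{1/2}=\lambda^{n/2}\det(g)^{1/2}$ and the metric-lowering factor by $(\lambda g)_{\ell m}=\lambda g_{\ell m}$, while scaling the stiffness tensor as $c\mapsto\mu c$ contributes an extra factor~$\mu$. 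Collecting all the scalar factors that sit inside the outer derivative~$\p_{x^j}$ into a single function
\begin{equation*}
\Psi:=\mu\,\lambda^{\frac{2+n}{2}},
\end{equation*}
I obtain
\begin{equation*}
(\Lop{}{\mu c}{\lambda g})^i_{\hspace{1mm}m}
=
\frac{1}{\lambda^{n/2}\sqrt{\det(g)}}\,
\p_{x^j}\lp \Psi\,\sqrt{\det(g)}\,c^{ijk\ell}g_{\ell m}\,\p_{x^k}\rp,
\end{equation*}
where the exponent $\tfrac{2+n}{2}$ appears already as the sum $\tfrac{n}{2}+1$ of the determinant and metric-lowering contributions.

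Next I would apply the product rule to the outer derivative, splitting $\p_{x^j}(\Psi\,(\cdots))$ into the term in which $\p_{x^j}$ hits the factor~$\Psi$ and the term in which it hits the remaining bracket. In the latter term the bracket equals exactly $\sqrt{\det(g)}$ times the unscaled operator $(\Lop{}{c}{g})^i_{\hspace{1mm}m}$ by the very definition~\eqref{eq:EWop}, so after cancelling the volume factors it contributes $\lambda^{-n/2}\Psi\,(\Lop{}{c}{g})^i_{\hspace{1mm}m}=\mu\lambda\,(\Lop{}{c}{g})^i_{\hspace{1mm}m}$. The former term, where $\p_{x^j}$ falls on~$\Psi$, leaves the first-order operator $\lambda^{-n/2}c^{ijk\ell}g_{\ell m}\,(\p_{x^j}\Psi)\,\p_{x^k}$.

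Finally I would multiply through by $(\lambda\mu\rho)^{-1}$ and collect. The operator term becomes $(\lambda\mu\rho)^{-1}\mu\lambda\,(\Lop{}{c}{g})^i_{\hspace{1mm}m}=\rho^{-1}(\Lop{}{c}{g})^i_{\hspace{1mm}m}$, which is the leading contribution on the right-hand side. The first-order term becomes
\begin{equation*}
\lambda^{-\frac{2+n}{2}}\mu^{-1}\rho^{-1}\,c^{ijk\ell}g_{\ell m}\,\p_{x^j}\lp\mu\lambda^{\frac{2+n}{2}}\rp\p_{x^k}
\end{equation*}
once the prefactor $\lambda^{-n/2}(\lambda\mu\rho)^{-1}=\lambda^{-\frac{2+n}{2}}\mu^{-1}\rho^{-1}$ is simplified, which is precisely $Q^i_{\hspace{1mm}m}$.

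There is no genuine analytic obstacle here; the content is entirely in the bookkeeping of the scalar powers of~$\lambda$ together with the single Leibniz split. The only point requiring care is tracking the exponent $\tfrac{2+n}{2}$, which must consistently be read as $\tfrac{n}{2}+1$, coming from the $\lambda^{n/2}$ of the volume form together with the one extra power of~$\lambda$ from lowering the index with~$\lambda g$; getting this exponent right is exactly what makes the correction term close up to~$Q$.
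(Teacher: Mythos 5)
Your computation is correct and matches the paper's own proof, which is the same direct calculation: pull the scalar factor $\mu\lambda^{(2+n)/2}$ out via the Leibniz rule and check that the prefactors simplify to $\rho^{-1}$ for the leading term and $\lambda^{-(2+n)/2}\mu^{-1}\rho^{-1}$ for the first-order remainder. No gaps.
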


Table~\ref{tab:ConformalFreedoms} lists the consequences of Lemma~\ref{lem:fullPDE}, where we distinguish between principal and full behavior of the PDE and between the Euclidean and the Riemannian setting. For the full behavior we list the case when $\mu \lambda^{\frac{2+n}{2}}$ is a constant so that $Qu=0$.

\begin{table}[!ht]
    \centering
    \begin{tabular}{|p{3.1cm}|p{4.5cm}|p{6.1cm}|}
        \hline & Euclidean EWE & Riemannian EWE  \\ \hline 
        Principal behavior & One conformal freedom $\mu$: $(\rho,c)\sim(\mu \rho,\mu c)$  & Two conformal freedoms $\mu, \lambda$:\par $(\rho,c,g)\sim(\lambda \mu \rho,\mu c, \lambda g)$  \\ \hline
        Full behavior & No conformal freedom & One conformal freedom $\mu$:\par  $(\rho,c,g)\sim(\mu^{\frac{n}{2+n}} \rho,\mu c, \mu^{-\frac{2}{2+n}} g)$\\ \hline
    \end{tabular}
    \vspace{.3em}
    \caption{Consequences of Lemma~\ref{lem:fullPDE} for inverse problems.
    The relation~$\sim$ means that the two parameters give rise to the same boundary data on~$\partial\Omega$ or~$\partial M$.
    These are scaling freedoms, not diffeomorphism freedoms; the freedom to choose coordinates in the Euclidean setting is eliminated by the demand that boundary be fixed by the diffeomorphism.}
    \label{tab:ConformalFreedoms}
\end{table}

\section{Proofs}

\subsection{Proofs of auxiliary results}

\begin{proof}[Proof of Lemma~\ref{lem:fullPDE}]
A calculation gives
    \begin{align*}
        & (\lambda \mu\rho)^{-1}\lp \Lop{}{\mu c}{ \lambda g}u \rp^i \\
        &\hspace{6mm}= \lambda^{-1} \mu^{-1} \rho^{-1} \frac{1}{\sqrt{\det(\lambda g)}}  \partial_{x^j} \lp \sqrt{\det(\lambda g)} \, \mu \, c^{ijk\ell} \, \lambda \, g_{\ell m} \, \partial_{x^k} u^{m}\rp\\
        &\hspace{6mm}=\underbrace{\rho^{-1}\frac{1}{\sqrt{\det(g)}} \, \sum_{j=1}^n \partial_{x^j} \lp \sqrt{\det(g)} \, c^{ijk\ell} \, g_{\ell m} \, \partial_{x^k} u^{m} \rp}_{=\rho^{-1}(\Lop{}{c}{g}u)^i}\\
        &\hspace{12mm}+ \underbrace{\lambda^{-\frac{2+n}{2}} \mu^{-1} \rho^{-1} \sum_{j=1}^n c^{ijk\ell}\, g_{\ell m}  \partial_{x^j} \lp \mu\lambda^{\frac{2+n}{2}}\rp \, \partial_{x^k} u^{m}}_{=(Qu)^i}.
    \end{align*}
as claimed.
\end{proof}

\begin{proof}[Consequences of Lemma~\ref{lem:fullPDE} as listed in Table~\ref{tab:ConformalFreedoms}]The elliptic term $(\lambda \mu\rho)^{-1}\lp \Lop{}{\mu c}{ \lambda g}u \rp^i$ corresponds to the operator $P_{(\lambda \mu \rho, \mu c, \lambda g)}$. As $Q^i_{\hspace{1mm}m}$ is a lower order term it follows directly that the principal symbols of the  operators $P_{(\lambda \mu \rho, \mu c, \lambda g)}$ and $P_{(\rho,c,g)}$ are preserved:
    \begin{equation}
        \sigma(P_{(\lambda \mu \rho, \mu c, \lambda g)})=\sigma(P_{(\rho,c,g)})
    \end{equation}
    as highlighted in the upper right corner of Table~\ref{tab:ConformalFreedoms}. Conformal freedoms for the full Riemannian EWE can only be obtained in the case when $Qu=0$. Hence, when the term $\mu\lambda^{\frac{2+n}{2}}$ is constant. Choosing $\lambda = \mu^{-\frac{2}{2+n}}$ implies $\mu\lambda^{\frac{2+n}{2}}=1$. Hence, in this case 
    \begin{equation}\label{eq:derivFullRiem}
        \lp \mu^{\frac{n}{2+n}}\rho \rp^{-1} \Lop{}{\mu c}{ \mu^{-\frac{2}{2+n}} g}=\rho^{-1}(\Lop{}{c}{g}u)
    \end{equation}
    and thus
    \begin{equation}
        P_{\lp \mu^{\frac{n}{2+n}} \rho, \mu c, \mu^{-\frac{2}{2+n}} g\rp}=P_{(\rho,c,g)}
    \end{equation}
    as highlighted in the lower right corner of Table~\ref{tab:ConformalFreedoms}.\par 
    In the Euclidean case the metric is fixed; this eliminates the conformal freedom corresponding to the function~$\lambda$:
    \begin{equation*}
    (\mu\rho)^{-1}(\Lop{}{\mu c}{ g_E})_{i\ell}= \rho^{-1}(\Lop{}{c}{g_E})_{i\ell} + Q_{i\ell}, 
    \end{equation*}
    with
    \begin{equation*}
    Q_{i\ell}= \mu^{-1} \rho^{-1} c_{ijk\ell} \partial_{x^j} \lp \mu\rp \partial_{x^k}.
    \end{equation*}
    It follows that the principal behavior is preserved for the pairs $(\mu \rho, \mu c)$ and $(\rho, c)$ so that
    \begin{equation}
        \sigma(P_{(\mu \rho, \mu c)})=\sigma(P_{(\rho, c)}).
    \end{equation}
    As the full Euclidean EWE can only be preserved when~$\mu$ is constant it follows that there are no conformal freedoms in this case. These observations are highlighted in the upper and lower left corner respectively of Table~\ref{tab:ConformalFreedoms}.
\end{proof}

\begin{proof}[Proof of Proposition~\ref{prop:CoordInvar}]
\textbf{Part~(\ref{part:rho-c-g}):}
All structures used in this claim are tensor fields and covariant derivatives in the sense of Riemannian geometry, and therefore they are invariant under a change of coordinates by design.
Verification using equations~\eqref{eq:pf-c/rhocg}, \eqref{eq:pf-g/rhocg} and~\eqref{eq:pf-rho/rhocg} by hand is also straightforward.

\textbf{Part~(\ref{part:rho-c}):}
Note that a conformal map~$\phi$ with respect to the Euclidean geometry satisfies
\begin{equation}
\label{eq:cf-pf-gE2}
    \phi_*g_E
    =
    \det(D\phi)^{-2/n}g_E.
\end{equation}
Such a map is conformal whenever the Jacobian  $D\phi$ at each point is a scalar times a rotation matrix and the formula \eqref{eq:cf-pf-gE2} can for instance be derived from \eqref{eq:pf-g/rhocg} exploiting that fact.
Combining this with part~(\ref{part:rho-c-g}) gives
\begin{equation}
    \phi_* \lp\Lop{\rho^{-1}}{c}{g_E}\rp u 
    =
    \Lop{(\phi_*\rho)^{-1}}{\phi_*c}{\det(D\phi)^{-2/n}g_E}\phi_*u
\end{equation}
for any~$u$.

We then apply Lemma~\ref{lem:fullPDE} with $\lambda=\det(D\phi)^{-2/n}$ and $\mu=\det(D\phi)^{1+2/n}$ (so that $\mu\lambda^{n/2+1}=1$) to get
\begin{equation}
    \Lop{(\phi_*\rho)^{-1}}{\phi_*c}{\det(D\phi)^{-2/n}g_E}
    =
    \det(D\phi)\Lop{(\phi_*\rho)^{-1}}{\det(D\phi)^{-1-2/n}\phi_*c}{g_E}
.
\end{equation}
The definitions in~\eqref{eq:pf-c/rhoc} and~\eqref{eq:pf-rho/rhoc} were set up so that $\phi_\pfcrho c=\det(D\phi)^{-1-2/n}\phi_*c$ and $\phi_\pfcrho \rho=\det(D\phi)^{-1}\phi_*\rho$.
Therefore
\begin{equation}
    \Lop{\rho^{-1}}{c}{g_E}u
    =
    \phi^* \lp \Lop{(\phi_\pfcrho \rho)^{-1}}{\phi_\pfcrho c}{g_E}\rp \phi_*u 
\end{equation}
from which the desired claim follows.

\textbf{Part~(\ref{part:c}):}
This is similar to part~(\ref{part:rho-c}) above, but we no longer need to satisfy $\mu\lambda^{n/2+1}=1$ as the subprincipal term of Lemma~\ref{lem:fullPDE} is irrelevant.
This corresponds to the principal symbol only depending on the density normalized stiffness tensor $\rho^{-1}c$, which is evident.
Our definition in~\eqref{eq:pf-c/c} satisfies
\begin{equation}
    \phi_\pfa c
    =
    (\phi_\pfcrho 1)^{-1}\phi_\pfcrho c
\end{equation}
with the constant density $1$, so the claim follows from essentially the same calculation as above.
\end{proof}

\subsection{Proof of the main result}

\begin{proof}[Proof of Theorem~\ref{thm:RiemConf}]
    As emphasized in part~(\ref{part:rho-c-g}) of Proposition~\ref{prop:CoordInvar}, the pushforwards of~$c$, $g$ and~$\rho$ defined in \eqref{eq:pf-c/rhocg}--\eqref{eq:pf-rho/rhocg} and the pushforward~$\phi_*u$ of the solution leave the Riemannian EWE invariant with respect to any diffeomorphism~$\phi$. Additionally we note that the DN map is invariant with respect to coordinate changes and with respect to the push forwards of $\rho, c$ and $g$ such a coordinate change can be expressed by:
    \begin{equation*}
        (\Lambda_{\left(\phi_* \rho, \phi_* c, \phi_* g \right)}f)^{\hat{i}}=(\phi_* \nu)_{\hat{j}} (\phi_* c)^{\hat{i}\hat{j}\hat{k}\hat{\ell}} (\phi_* g)_{\hat{\ell}\hat{m}} \partial_{x^{\hat{k}}} (\phi_* u)^{\hat{m}}
    \end{equation*}
    where,
    \begin{align*}
        (\phi_* \nu)_{\hat{j}}&=\lp \nu_j (D\phi^{-1})_{\hspace{2mm} \hat{j}}^j\rp \circ  \phi^{-1},\\
        (\phi_* u)^{\hat{m}}&=\lp u^m (D\phi)_{\hspace{2mm} m}^{\hat{m}}\rp \circ  \phi^{-1}.
    \end{align*}
    If $\phi$ is a diffeomorphism that fixes the boundary and thus is the identity in a neighborhood of $\partial M$ one can assume $D\phi(x)=I$ for $x$ in that neighborhood. This implies that $\phi_* \rho(x)=\rho(x), \phi_* c(x)=c(x), \phi_* g(x)=g(x), \phi_* \nu(x)=\nu(x)$ and $\phi_* u(x)=u(x)$ for $x$ in that neighborhood.  Therefore the triplet $(\phi_* \rho, \phi_* c, \phi_* g)$ gives rise to the same DN map as the triplet $(\rho, c, g)$:
    \begin{equation}\label{difDN}
        \Lambda_{\left(\phi_* \rho, \phi_* c, \phi_* g \right)}=\Lambda_{(\rho, c, g)}.
    \end{equation}
Additionally it was shown as a consequence of Lemma~\ref{lem:fullPDE} in \eqref{eq:derivFullRiem} that there is the following conformal freedom for the elastic wave operator $P_{(\rho,c,g)}$ for any non-vanishing function~$\mu$:
    \begin{equation}
    P_{\lp \mu^{\frac{n}{2+n}} \rho, \mu c, \mu^{-\frac{2}{2+n}} g\rp}=P_{(\rho,c,g)}.
    \end{equation}
    We note that 
    \begin{equation*}
        \Lambda_{\lp \mu^{\frac{n}{2+n}} \rho, \mu c, \mu^{-\frac{2}{2+n}} g\rp} = \mu^{\frac{n}{2+n}} \Lambda_{(\rho, c, g)}
        .
    \end{equation*}
    This implies for any function $\mu \neq 0$ that satisfies $\mu \vert_{\partial M}=1$:
    \begin{equation}\label{confDN}
        \Lambda_{\lp \mu^{\frac{n}{2+n}} \rho, \mu c, \mu^{-\frac{2}{2+n}} g\rp}=\Lambda_{(\rho, c, g)}.
    \end{equation}
    Combining~\eqref{difDN} and~\eqref{confDN} yields 
    \begin{equation*}
        \Lambda_{\left(\mu^{\frac{n}{2+n}} \,\phi_* \rho, \mu \, \phi_* c, \mu^{-\frac{2}{2+n}} \, \phi_* g \right)}=\Lambda_{(\rho, c, g)}
    \end{equation*}
    as claimed.
\end{proof}

\appendix
\section{Relation to Euclidean, Riemannian, and Lorentzian Calderón problems}

Coordinate gauge freedom and conformal freedoms occur in some other inverse boundary value problems, and we discuss some examples to provide a point of comparison to our results.

The citations given in this appendix are only examples of the vast literature on these problems.
For more details and background, we refer the reader to~\cite{30years,Belishev_2007,Lassas18,KatchalovKurylevLassas01} and references therein.



\subsection{DN maps for Euclidean, Riemannian, and Lorentzian Calderón problems}
\label{app:dn}

The inverse problems we introduced in section~\ref{sec:dn-intro} are closely related to the Calderón problem concerned with the conductivity equation describing electrostatics:
\begin{equation*}
    \begin{cases}
        L_{\gamma}u(x):=\nabla \cdot (\gamma(x) \nabla u(x)) = 0 & \text{in }\Omega\\
        u = f & \text{on }\partial \Omega,
    \end{cases}
\end{equation*}
where~$\gamma$ denotes an isotropic or anisotropic conductivity. The DN map corresponding to~$L_{\gamma}$ is defined by
\begin{equation}\label{DNmapCe}
    \Lambda_{\gamma} f= \nu_i \, \gamma_{ij} \, \partial_{x_j} u \Bigg\vert_{\partial \Omega}.
\end{equation}
The Calderón problem is then concerned with recovering~$\gamma$ from~$\Lambda_{\gamma}$.
For an anisotropic conductivity~$\gamma$ it turns out that this inverse problem is closely related to a corresponding geometric inverse problem. Consider the Dirichlet problem associated to the Laplace--Beltrami operator on $(\Omega,g)$
\begin{equation*}
    \begin{cases}
        \Delta_{g} u = 0 & \text{in }\Omega\\
        u = f & \text{on }\partial \Omega,
    \end{cases}
\end{equation*}
where (cf. Remark~\ref{rmk:cov-div} for an alternative formula)
\begin{equation}
    \Delta_g u = \frac{1}{\sqrt{\det(g)}}\partial_{x^{i}} \lp \sqrt{\det(g)} g^{ij}  \partial_{x^j} u\rp
\end{equation}
and with corresponding DN map defined by
\begin{equation*}
\label{DNmapCr}
    \Lambda_{g} f= \nu_i \, g^{ij} \, \partial_{x^j} u \Bigg\vert_{\partial \Omega}.
\end{equation*}
Then the geometric Calderón problem asks to recover~$g$ from~$\Lambda_g$. One can also consider the following geometric version of the conductivity equation on a Riemannian manifold $(M,g)$:
\begin{equation*}
    \begin{cases}
        \text{div}_g(\beta \nabla_g u) = 0 & \text{in }M\\
        u = f & \text{on }\partial M,
    \end{cases}
\end{equation*}
where~$\beta$ is an isotropic conductivity. In this setting the DN map is defined as 
\begin{equation}\label{DNmapRiem}
    \Lambda_{(\beta,g)} f=\nu_i \, \beta \, g^{ij} \, \partial_{x^j} u \Bigg\vert_{\partial M}.
\end{equation}
This gives rise to the Calderón problem on a Riemannian manifold that asks to recover~$\beta$ and~$g$ from~$\Lambda_{(\beta,g)}$.

There is also a similarity to the Lorentzian Calderón problem in that there is a conformal gauge freedom in all dimensions.
An example is the non-linear wave equation
\begin{equation*}
    \begin{cases}
        \square_g u(x) + a(x) u(x)^4 = 0, & \text{on }M,\\
        u(x)=f(x), & \text{on }\partial M,\\
        u(t,x')=0, & t<0,
    \end{cases}
\end{equation*}
where $\square_g$ is the d'Alembert operator of~$g$ (the Laplace--Beltrami operator on a Lorentzian manifold) and $M=\R \times N$.
The corresponding DN map is defined as
\begin{equation}\label{DNmapLor}
    \Lambda_{(g,a)}f=\nu^j \partial_{x^j} u \vert_{(0,T)\times \partial N}
\end{equation}
and the Lorentzian Calderón problem asks to recover the metric~$g$ and the function~$a$ from~$\Lambda_{(g,a)}$.

\subsection{Gauge freedom in the Euclidean and Riemannian Calderón problems}

It was observed by L. Tartar that the map~$\Lambda_{\gamma}$ defined in~\eqref{DNmapCe} does not determine~$\gamma$ uniquely (see~\cite{KohnVog84} for an account). This is due to the fact that any~$C^{\infty}$ diffeomorphism~$\phi\colon \overline{\Omega} \to \overline{\Omega}$ with $\phi \vert_{\partial \Omega}=\operatorname{id}$ and with the Euclidean pullback 
of the conductivity
\begin{equation}
\label{eq:pf-calderon}
    \phi^{\pfcrho} \gamma
    =
    \frac{1}{\det( D \phi)} D \phi \, \gamma \, D \phi^t, 
\end{equation}
produces the same DN map as~$\gamma$:
\begin{equation}
    \Lambda_{\phi^{\pfcrho} \gamma} = \Lambda_{\gamma}.
\end{equation}
Notice that the pullback defined in~\eqref{eq:pf-calderon} and the pushforward defined in~\eqref{eq:pf-c/rhoc} are not the usual pullbacks and pushforwards of tensor fields in differential geometry.

Similarly to the Euclidean case~$\Lambda_g$ as defined in~\eqref{DNmapCr} does not determine~$g$ uniquely due to the coordinate gauge freedom
\begin{equation}
    \Lambda_{\phi_* g} = \Lambda_{g},
\end{equation}
where~$\phi_*$ denotes the classical pushforward of~$g$ by~$\phi$.
It was shown in~\cite{LasUhl01} that this is the only gauge freedom for real analytic metrics in dimension $n\geq 3$.

In dimension $n=2$ the Laplace--Beltrami operator is conformally invariant which gives rise to an additional gauge for the inverse problem:
\begin{equation*}
    \Lambda_{\alpha (\phi_{*} g)} = \Lambda_g.
\end{equation*} 
This is proven by~\cite{LasUhl01} to be the only obstruction for unique identifiability of~$g$.

The inverse problems for the DN maps of~\eqref{DNmapCe} and~\eqref{DNmapCr} are equivalent when $n\geq3$.
Namely, if we set
\begin{equation*}
\begin{split}
    g &= (\det \gamma)^{\frac{1}{n-2}} \gamma^{-1}
    \quad \text{or equivalently} \\
    \gamma &= (\det \, g)^{\frac{1}{2}} g^{-1},
\end{split}
\end{equation*}
then (see~\cite{LeeUhlmann89})
\begin{equation*}
    \Lambda_{g}=\Lambda_{\gamma},
\end{equation*}
if we express $\Lambda_g$ with respect to the Euclidean surface measure:
\begin{equation*}
    \Lambda_{g} f= \sqrt{\det g}\,\nu_i \, g^{ij} \, \partial_{x^j} u \Bigg\vert_{\partial \Omega}.
\end{equation*}

\subsection{Gauge freedom in the Calderón problem on manifolds}

It was shown in~\cite{SunUhlmann03} that the conformal freedom of the Laplace--Beltrami operator in two dimensions yields the following conformal freedom for the DN operator defined in~\eqref{DNmapRiem} in two dimensions for $\phi\colon M \to M$ with $\phi \vert_{\partial M}=\operatorname{id}$:
\begin{equation*}
    \Lambda_{(\alpha (\phi_{*} g), \phi_* \beta)} = \Lambda_{(g,\beta)},
\end{equation*}
for any positive scalar function~$\alpha$. 

\subsection{Gauge freedom in the Lorentzian Calderón problem}

Similarly to the previous Calderón problems there is a gauge freedom by a diffeomorphism for the DN map defined in~\eqref{DNmapLor}.
It was shown in~\cite{HintzUhlmannZhai22} that for $\phi\colon M \to M$ with $\phi \vert_{\partial M}=\operatorname{id}$ we have
\begin{equation*}
    \Lambda_{(\alpha (\phi^{*} g), \phi^* a)} = \Lambda_{(g,a)}
\end{equation*}
and that there is the conformal freedom
\begin{equation*}
    \Lambda_{(e^{-2\beta}g,e^{-\beta}a)}=\Lambda_{(g,a)}
\end{equation*}
for a smooth function~$\beta$ on~$M$ such that
\begin{equation*}
    \beta \vert_{\partial M}=0, \quad \partial_{\nu} \beta \vert_{\partial M} = 0, \quad \square_g e^{-\beta}=0.
\end{equation*}



\printbibliography



\end{document}